\documentclass[11pt]{article}
\usepackage[T1]{fontenc}

\usepackage{mathtools}
\usepackage{amssymb}
\usepackage{amsthm}
\usepackage{stmaryrd}
\usepackage{graphicx}
\usepackage{tikz}
\usepackage{tikz-cd} 
\usepackage{subcaption}

\usepackage{xcolor}
\usepackage{hyperref}    

\newcommand{\Mde}{\mathbf{Mde}}
\newcommand{\Scl}{\mathbf{Scl}}

\newcommand{\T}{\mathrm{T}}

\newcommand{\Z}{\mathbb{Z}}

\theoremstyle{definition}
\newtheorem{definition}{Definition}[section]
\newtheorem{example}{Example}[section]

\theoremstyle{plain}
\newtheorem{proposition}{Proposition}[section]
\newtheorem{theorem}{Theorem}[section]
\newtheorem{corollary}{Corollary}[section]

\usepackage{biblatex}
\begin{document}
\title{A Theory of Scales and Orbit Covers}
\author{Drew Flieder\\
Independent Scholar\\
\texttt{drew@drewflieder.xyz}}

\date{}

\maketitle            

\begin{abstract}
This paper develops a formal theory of musical scales and their harmonic coverings and introduces \emph{orbit covers}: coverings obtained by translating a fixed subset across a scale via a group action. Orbit covers generalize familiar constructions, such as the covering of the diatonic scale by tertian triads, and are motivated by the search for a generalized harmonic framework extending common-practice tonality.

We model modes as group structures associated with pitch-class sets and scales as torsors, introducing \emph{scale covers} and, in particular, \emph{orbit covers}. To each orbit cover we associate a \emph{nerve complex} encoding its intersection structure and associated topological invariants. We classify triadic orbit covers of heptatonic scales up to affine symmetry and nerve isomorphism.

These results support a broader theory of harmonic organization with analytical and compositional applications.

\end{abstract}

\noindent\textbf{Note.}
This is a preprint of a paper submitted to the proceedings of the 10th Conference on Mathematics and Computation in Music (MCM 2026). This version has not undergone peer review. The final Version of Record will be available via Springer Nature.

\section{Introduction}

This paper develops a formal theory of scales and their harmonic coverings, using tools from algebra and combinatorial topology to classify pitch-class collections based on how they are covered by subsets. The motivation, both analytical and creative, is to lay the groundwork for a new system of harmony that generalizes the functional harmony of common-practice tonality, while allowing the greater harmonic flexibility often found in post-tonal music. This project is rooted in the author's compositional practice, which seeks to extend tonal function beyond diatonic harmony.

Central to this theory is the idea that harmonic structure arises from the relations between subsets of a scale. To model this, we introduce a class of structures called \emph{orbit covers}, which represent chordal coverings of a scale generated by iterated scalar translations of an initial subset. The diatonic scale covered by tertian triads serves as a canonical example of such a structure.

Section~\ref{sec:modes-and-scales} formalizes the notion of a \emph{mode} as a group structure on a pitch-class collection, following which we define a \emph{scale} as a torsor over a cyclic group. This construction serves as the foundation for the structures developed in the sections that follow. A recent and complementary approach is given by \cite{harasim2022axiomatic}, who provide a rigorous axiomatic account of scales as cyclic embeddings between finite cyclically ordered sets. Although the present work does not build on that framework, it shares a related aim of giving a structural foundation for scale theory within a mathematical setting. We do not, however, engage with the detailed properties of particular scale types, for which there exists a rich and well-developed literature (see, e.g., \cite{carey1989aspects,clampitt2011modes,clough1985variety,clough1986musical,clough1991maximally,clough1999scales,noll2015triads}, among many others).

In Section~\ref{sec:scale-covers}, we define \emph{scale covers} as families of subsets that exhaust a scale, and specialize to \emph{orbit covers}, with particular focus on \emph{primitive orbit covers}—those generated by a single chord type under translation. We obtain a classification result in Theorem~\ref{thm:triadic-classes-seven} regarding the number of distinct triadic orbit covers of seven-note scales.

In Section~\ref{sec:topology-of-scale-covers}, we investigate the topological structure of orbit covers via their associated \emph{nerve complexes}. These encode the intersection patterns among chords in a cover, revealing combinatorial invariants such as connectedness and the presence of $n$-dimensional holes. We show that orbit covers can be classified up to isomorphism by affine automorphisms of the scale's torsor group. This yields a refined classification result in Theorem~\ref{thm:heptatonic-triadic-isoclasses}, enumerating the isomorphism classes of triadic orbit covers of seven-note scales.

The theory presented here forms the foundational layer of a broader framework the author is developing in an unpublished manuscript titled \emph{Mathematical Principles of a Generalized Tonal Practice}, which aims to generalize functional harmony across arbitrary pitch-class collections. A brief outlook on this larger system and its compositional implications appears in the concluding section; a more comprehensive treatment is developed in my companion paper, ``Functional Harmony After Tonality: Composing with Orbit Covers.''

\section{Modes and Scales}
\label{sec:modes-and-scales}

Musical scales are traditionally conceived as ordered pitch collections, often associated with concepts such as stepwise adjacency, directionality, and modality. In this section, we develop a formal apparatus for encoding such structures by treating scales as torsors over cyclic groups and defining their modes as group structures with distinguished tonics. This prepares the ground for our later development of harmonic coverings—particularly orbit covers—which rely on this stepwise structure to generate chords across the scale.

\subsection{Modes}

To give a preliminary account of a scale, let $X \subseteq \Z_N$ be a subset of the $N$-tone pitch-class universe. For example, the C major scale is typically represented as
\[
X = \{ 0, 2, 4, 5, 7, 9, 11 \} \subseteq  \Z_{12}.
\]

This bare set-theoretic notion, however, omits the structural features usually associated with scales, such as adjacency and directionality. For instance, given $x, y \in X$, we may wish to ask whether $y$ is a scalar step above $x$, or how many steps apart they are. These questions cannot be answered if $X$ is treated merely as a set. Additional structure is needed.

A first observation is that a scale $X$ appears to carry a cyclic structure, and hence may be modeled by the group $\Z_n$, where $n = |X|$. Thus, we seek a bijection
\[
g : X \longrightarrow \Z_n
\]
that assigns to each $x \in X$ a position in a stepwise group. This allows us to speak meaningfully about adjacency: $y$ lies $k$ steps above $x$ precisely when $g(y) - g(x) = k$.

Clearly, not every bijection reflects the conventional scalar order. For example, if $X$ is the C major scale and $h : X \to \Z_7$ sends $2 \mapsto 0$ and $11 \mapsto 1$, then $h(11) - h(2) = 1$, implying that B$\natural$ is adjacent to D$\natural$, which is a clearly atypical assignment. While such maps may have compositional interest, they do not represent the standard step structure. We thus seek a canonical class of bijections $g : X \to \Z_n$ that preserve scalar adjacency.

Although listing the elements of $X$ in ascending order provides an initial ordering, this is arbitrary up to rotation. To select a canonical representative, we apply the \emph{normal order}, which chooses the most compact rotation of $X$, minimizing the span of intervals from left to right. This yields a set map
\begin{equation}
\mathrm{norm}_X : [n] \longrightarrow X,
\end{equation}
where $[n] = {0,1,\ldots,n-1}$ indexes the elements of $X$ in normal order.

Identifying $[n]$ with $\Z_n$ via $\eta : [n] \to \Z_n$, $i \mapsto i$, we canonically label the elements of $X$ with group coordinates. This induces a map $\mu : X \to \Z_n$ such that the following diagram commutes:
\[\begin{tikzcd}
	{[n]} && X \\
	\\
	&& {\Z_n}
	\arrow["{\mathrm{norm}_X}", from=1-1, to=1-3]
	\arrow["\eta"', from=1-1, to=3-3]
	\arrow["\mu", from=1-3, to=3-3]
\end{tikzcd}\]

Using $\mu$, we may define a group operation on $X$ by
\[
x \oplus y \coloneqq \mu^{-1}\big(\mu(x) + \mu(y)\big),
\]
which equips $X$ with a group structure isomorphic to $\mathbb Z_n$.

We are now in a position to define a notion of \emph{mode}.

\begin{definition}[Mode]
\label{def:mode}
Let $\Z_N$ be a cyclic group representing the $N$-element pitch-class universe, and let $X \subseteq \Z_N$ with $|X| = n$. For each $i \in \Z_n$, let $\T_i : \Z_n \to \Z_n$ denote translation by $i$. Define $\mu_i : X \to \Z_n$ by
\[
\mu_i \coloneqq \T_i \circ \mu. 
\]
We define the \emph{$(i+1)$th mode of $X$} as the group $(X, \oplus_{\mu_i})$, where
\[
x \oplus_{\mu_i} y \coloneqq {\mu_i}^{-1}({\mu_i}(x) + {\mu_i}(y)).
\]
We call the identity element $t \in X$ the \emph{tonic} and refer to $\Z_n$ as the \emph{degree group} of the mode.\footnote{Formally, this construction may be expressed via a type-forming rule
\[
\Z_N : \mathsf{Cyc}, \; X : P(\Z_N), \; i : [n] \;\vdash\; \mathsf{mode}(\Z_N, X, i) : \mathsf{Mode},
\]
as presented in \cite{flieder2025typetheory}, where $\mathsf{Cyc}$ denotes a type of cyclic groups and $P(\Z_N)$ the type of subsets of $\Z_N$. However, a type-theoretic formulation is beyond our present scope.}
\end{definition}

\begin{example}[Modes of a Major Scale]
Let \( X = \{0,2,4,5,7,9,11\} \subset \Z_{12} \) denote the C major scale. Its normal order is
\[
\mathrm{norm}_X = (11, 0, 2, 4, 5, 7, 9),
\]
and thus its first mode \( \mu_0 : X \to \Z_7 \) is \emph{Locrian}, since it assigns $11$ (the B$\natural$) to the tonic position. 
\end{example}

We will often denote a mode $(X, \oplus_{\mu_i})$ simply by its underlying set $X$, omitting the subscript and writing the operation as $\oplus$ when no confusion arises. 

\begin{definition}[Mode Homomorphism]
\label{def:mode-homomorphism}
Let $(X, \oplus_{\mu_i})$ and $(X', \oplus_{\mu_{i'}})$ be modes with $|X| = n$ and $|X'| = n'$. A \emph{mode homomorphism}
\[
\phi : (X, \oplus_{\mu_i}) \longrightarrow (X', \oplus_{\mu_{i'}})
\]
is a function $\phi : X \to X'$ induced by a group homomorphism
\[
\hat{\phi} : \Z_n \longrightarrow \Z_{n'}
\]
that makes the following diagram commute:
\[\begin{tikzcd}
	X && {X'} \\
	\\
	{\Z_n} && {\Z_{n'}}
	\arrow["\phi", from=1-1, to=1-3]
	\arrow["{\mu_i}"', from=1-1, to=3-1]
	\arrow["{\mu_{i'}}", from=1-3, to=3-3]
	\arrow["{\hat{\phi}}"', from=3-1, to=3-3]
\end{tikzcd}\]
The homomorphism $\hat{\phi}$ is of the form
\begin{equation}
\label{eqn:hom-condition1}
\hat{\phi}(j) = aj \bmod n',
\end{equation}
where 
\begin{equation}
\label{eqn:hom-condition2}
a = \frac{n'}{\gcd(n, n')}.
\end{equation}
These cases ensure that the ``clockwise'' orientation of the mode is preserved.
\end{definition}

Mode homomorphisms compose associatively and admit identities, so the class of all modes and their homomorphisms forms a category, denoted $\Mde$.

\subsection{Scales}

While it is formally convenient to model a mode as a group structure on a pitch-class set, this framing is somewhat artificial. In practice, one is not usually interested in adding scale degrees; rather, one cares about the distances—or step relations—between them. The group identity in this context functions mainly to designate a distinguished tonic, providing an ``origin'' for the scale. The binary operation $\oplus_{\mu_i}$ then allows us to speak of stepwise motion, typically via subtraction.

If we ``forget the tonic'' $t \in X$ in a mode $(X, \oplus_{\mu_i})$, we are left with a structure known as a \emph{torsor}. Informally, a torsor is like a group without a specified identity element. This more accurately reflects the way scalar relationships are typically understood, namely, as origin-independent. Indeed, chromatic pitch-class space is better modeled as a torsor rather than a group, since no pitch class functions intrinsically as an identity.\footnote{Likewise for chromatic pitch space; see \cite{baez2009torsors}.} When we speak of pitch-class relations, we are describing intervals as relative differences, not as coordinates anchored to a fixed reference point.

\begin{definition}[Torsor]
Let $G$ be a group. A \emph{$G$-torsor} is a set $X$ equipped with a simply transitive action $\tau : G \times X \to X$. 
\end{definition}

In our setting, any mode $(X, \oplus_{\mu_i})$ with $|X| = n$ induces a $\Z_n$-torsor:
\begin{equation}
\label{eqn:torsor}
\begin{matrix}
\tau : & \Z_n \times X &  \longrightarrow &  X \\
&  \tau(g, x) & \longmapsto &  \mu_i^{-1}(g + \mu_i(x)),
\end{matrix}
\end{equation}
which is in fact the same torsor for every mode on $X$. The action $\tau$ expresses how pitch classes in $X$ are related by their stepwise distances: $\tau(g, x)$ yields the pitch class that lies $g$ steps above $x$.

This leads naturally to the definition of a \emph{scale}.

\begin{definition}[Scale]
Let $(X, \oplus_{\mu_i})$ be a mode with $|X| = n$. The \emph{(underlying) scale} of $X$ is the $\Z_n$-torsor defined in~\eqref{eqn:torsor}; it forgets the tonic of $X$ while retaining its stepwise structure. We denote this scale by $(X, \Z_n)$, or simply by $X$ when unambiguous.
\end{definition}

To define morphisms between scales, we recall the general notion of a torsor homomorphism. A \emph{torsor homomorphism} between a $G$-torsor $(X, G)$ and an $H$-torsor $(Y, H)$ is a pair $(f, \varphi)$ consisting of a set function $f : X \to Y$ and a group homomorphism $\varphi : G \to H$ such that
\[
f(g \cdot x) = \varphi(g) \cdot f(x)
\]
for all $g \in G$ and all $x \in X$.

This yields the corresponding notion for scales.

\begin{definition}[Scale Homomorphism]
\label{def:scale-homomorphism}
Let $(X, \Z_n)$ and $(X', \Z_{n'})$ be scales. A \emph{scale homomorphism} is a pair 
\[
(\phi_{i,i'}, \hat{\phi}) : (X, \Z_n) \longrightarrow (X', \Z_{n'}),
\] 
where $\hat{\phi} : \Z_n \to \Z_{n'}$ is the group homomorphism from Definition~\ref{def:mode-homomorphism} and $\phi_{i,i'} : X \to X'$ is a set map such that the diagram
\[\begin{tikzcd}
	X && {X'} \\
	\\
	{\Z_n} && {\Z_{n'}}
	\arrow["{\phi_{i,i'}}", from=1-1, to=1-3]
	\arrow["{\mu_i}"', from=1-1, to=3-1]
	\arrow["{\mu_{i'}}", from=1-3, to=3-3]
	\arrow["{\hat{\phi}}"', from=3-1, to=3-3]
\end{tikzcd}\]
commutes. Since any choice of modes $(\mu_i, \mu_{i'})$ determines the same underlying torsors, these maps define all scale homomorphisms from $(X, \Z_n)$ to $(X', \Z_{n'})$.
\end{definition}

Scales and their homomorphisms thus form a category, denoted $\Scl$.

\section{Scale Covers and Their Classification}
\label{sec:scale-covers}

When composing with a particular scale $X$, one often seeks a collection of harmonic subsets from which to construct chords. These subsets collectively serve as the harmonic material available within the scale. To formalize this idea, we introduce the notion of a \emph{scale cover}, a covering of the scale by subsets. We then define a special class of covers called \emph{orbit covers}, which arise from the action of scalar translation on a generating subset. This leads to a classification result in Section \ref{sub:classification-of-orbit-covers}.

\subsection{Scale Covers and Morphisms}

\begin{definition}[Scale Cover]
\label{def:scale-cover}
Let $X$ be a scale (i.e., a $\mathbb{Z}_n$-torsor). A \emph{scale cover} is a family $\{U_i\}_{i \in I}$ of subsets of $X$ such that $\bigcup_{i \in I} U_i = X$. 

A \emph{morphism of scale covers}
\[
(f, \varphi) : (X, \{U_i\}) \longrightarrow (Y, \{V_j\})
\]
consists of a scale homomorphism $f : X \to Y$ and an index map $\varphi : I \to J$ such that
\[
f(U_i) \subseteq V_{\varphi(i)} \quad \text{for all } i \in I.
\]
These form the category $\mathbf{SclCov}$, with a canonical forgetful functor to the category of scales $\mathbf{Scl}$.
\end{definition}

This general notion captures any family of chords that exhausts a scale. However, musical practice often employs such families that are not arbitrary, but instead exhibit systematic generation from a single chord. For example, diatonic triads arise by translating a three-note chord uniformly across the scale. This motivates the more specialized concept of an \emph{orbit cover}.

\subsection{Orbit Covers and Primitive Covers}

\begin{definition}[Orbit Cover]
\label{def:orbit-cover}
Let $X \subseteq \mathbb{Z}_N$ be a scale with $|X| = n$, and let $A \subset X$ be nonempty. Let $\tau : \mathbb{Z}_n \times X \to X$ denote the torsor action associated with $X$.

To emphasize its musical interpretation, we write the elements of $\mathbb{Z}_n$ as $\T_i$, so that $\T_i(x) \coloneqq \tau(i, x)$ represents scalar translation upward by $i$ steps. This defines the group of scalar translations
\[
\T X = \{\, \T_i : X \to X \mid i \in \mathbb{Z}_n \,\} \cong \mathbb{Z}_n,
\]
which acts transitively on $X$.

The \emph{orbit cover} generated by $A$ is the family
\[
X^{(A)} = \{\, \T_i(A) \mid i \in \mathbb{Z}_n \,\}.
\]
If $\gcd(n, |A|) = 1$, then the action of $\T X$ on $X^{(A)}$ is free, and the cover is said to be \emph{primitive}. In this case, we have 
\[
|X^{(A)}| = |X|.
\]
\end{definition}

Primitive orbit covers are particularly salient because each chord in the cover corresponds uniquely to an element of the scale. This is exemplified in the case of diatonic triadic harmony, where each triad is indexed by its root and thus in bijection with the scale degrees.

\begin{example}[Diatonic Triadic Orbit Cover]
Consider the C major scale
\[
X = \{0, 2, 4, 5, 7, 9, 11\} \subseteq \mathbb{Z}_{12},
\]
equipped with its usual scalar step ordering. This defines a $\mathbb{Z}_7$-torsor under the action of scalar translation.

Let $A = \{0, 4, 7\} = \{\mathrm{C}, \mathrm{E}, \mathrm{G}\}$, the tonic triad. Applying scalar translation $\T_i$ for $i \in \mathbb{Z}_7$ generates the orbit
\[
X^{(A)} = \{ \T_i(A) \mid i \in \mathbb{Z}_7 \} = \{\text{C maj}, \text{D min}, \text{E min}, \text{F maj}, \text{G maj}, \text{A min}, \text{B dim}\},
\]
which is precisely the set of diatonic triads. Since $\gcd(7,3) = 1$, the cover is primitive, and each triad corresponds uniquely to a scale degree.
\end{example}

\subsection{Classification of Orbit Covers}
\label{sub:classification-of-orbit-covers}

We now seek a classification of the orbit covers that may arise for a given scale $X \subseteq \Z_N$ with $|X| = n$. We begin by outlining the general formalism for such covers; the specific case of triadic orbit covers over seven-note scales is addressed in Theorem \ref{thm:triadic-classes-seven}.

Our first step is to describe all possible $k$-element subsets of $X$---which we call \emph{$k$-ads}---up to scalar translation. To this end, we proceed as follows:\begin{enumerate}
\item Choose a basepoint $x \in X$.
\item Specify an ordered sequence
	\[
	\sigma = (i_1, \ldots, i_k)
	\]
	of positive integers summing to $n$, i.e.,
	\begin{equation}
	\label{eqn:sigma-n-k}
	\sigma \in \Sigma(n, k) \coloneqq 
	\left\{\, (i_1, \ldots, i_k) \in \mathbb{Z}_{>0}^k 
	\ \middle| \ 
	\sum_{j=1}^k i_j = n \,\right\}.
	\end{equation}
\item Let $S_b = \sum_{j=1}^b i_j$ denote the $b$th partial sum of $\sigma$, with $S_0 = 0$. Define the corresponding $k$-ad by
	\begin{equation}
	\label{eqn:sigma-x}
	\sigma(x) \coloneqq \left\{\, \T_{S_b}(x) \;\middle|\; 0 \leq b < k \,\right\} \subset X.
	\end{equation}
\end{enumerate}

We refer to elements of $\Sigma(n,k)$ as \emph{interval compositions} of $n$ into $k$ parts. Each $\sigma \in \Sigma(n,k)$ encodes a way of partitioning the scale span into $k$ successive intervals. The set $\sigma(x)$ is called the \emph{realization} of $\sigma$ at $x$, or the \emph{chord} generated by $\sigma$ at $x$.

We now introduce an equivalence relation on interval compositions: two compositions $\sigma, \sigma' \in \Sigma(n,k)$ are considered equivalent, written $\sigma \sim \sigma'$, if there exists a scalar translation $\T_p \in \T X$ such that
\[
\T_p(\sigma(x)) = \sigma'(x).
\]

The following proposition characterizes this equivalence precisely in terms of coordinate rotation.

\begin{proposition}
\label{prop:unique-interval-sequence}
Let $\sigma, \sigma' \in \Sigma(n,k)$. Then $\T_p(\sigma(x)) = \sigma'(x)$ for some $p \in \mathbb{Z}_n$ if and only if $\sigma'$ is a rotation of $\sigma$, i.e., there exists $\mathrm{R}_i \in \mathbf{R}\mathbb{Z}^k$ such that
\[
\mathrm{R}_i(\sigma) = \sigma'.
\]
\end{proposition}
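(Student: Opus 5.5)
The plan is to pass to coordinates and reduce both directions to one combinatorial fact: a $k$-element subset of a $\Z_n$-torsor, read cyclically from one of its own elements, determines a unique interval composition, and changing the starting element rotates that composition.

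\emph{Reduction.} Fix any mode map $\mu_i : X \to \Z_n$. By~\eqref{eqn:torsor} it identifies $\T_j$ with addition of $j$. Since $\Z_n$ is abelian, translations commute with one another, so
\[
\T_p(\sigma(x)) = \{\T_{S_b}(\T_p x)\} = \sigma(\T_p(x)).
\]
Hence the condition $\T_p(\sigma(x)) = \sigma'(x)$ is equivalent to $\sigma(y) = \sigma'(x)$ with $y = \T_p(x)$. Because $0 = S_0 < S_1 < \cdots < S_{k-1} < n$, the elements $\T_{S_b}(x)$ are pairwise distinct, so $\sigma(x)$ really is a $k$-ad.

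\emph{Gap sequence.} For a $k$-subset $Y \subseteq X$ and a basepoint $y \in Y$, I will define $\gamma(Y, y) \in \Sigma(n,k)$ as follows. Write the elements of $Y$ as $\T_{c_0}(y), \dots, \T_{c_{k-1}}(y)$ with representatives $0 = c_0 < c_1 < \cdots < c_{k-1} < n$. Then set
\[
\gamma(Y, y) = (c_1 - c_0, \dots, c_{k-1} - c_{k-2},\, n - c_{k-1}).
\]
Each entry is positive and the entries sum to $n$, so $\gamma(Y,y) \in \Sigma(n,k)$. This is well defined because the representatives $c_b \in [0,n)$ are unique. Directly from~\eqref{eqn:sigma-x}, $\gamma(\sigma(x), x) = \sigma$, so $\sigma(x) = \sigma'(x)$ forces $\sigma = \sigma'$.

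\emph{Rotation lemma.} This is the key step: $\gamma(\sigma(x), \T_{S_b}(x)) = \mathrm{R}_b(\sigma)$, the cyclic rotation beginning at $i_{b+1}$. To prove it, re-base the partial sums at $S_b$. The offsets of the elements relative to the new basepoint become $S_{b+j} - S_b$ for $j < k-b$, and $n - S_b + S_{j'}$ for the wrapped-around elements. These are already in increasing order in $[0,n)$, and their successive differences are $i_{b+1}, \dots, i_k, i_1, \dots, i_b$. I expect this index bookkeeping to be the main obstacle. In particular, I must match the direction convention of $\mathrm{R}_i \in \mathbf{R}\Z^k$; if the paper rotates the other way, I will replace $b$ by $-b$ modulo $k$.

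\emph{The two directions.} For ($\Leftarrow$), suppose $\sigma' = \mathrm{R}_b(\sigma)$. By the rotation lemma, $\sigma'(\T_{S_b}x) = \sigma(x)$ as sets, since both are the unique set with that gap sequence from that basepoint. Translating by $\T_{-S_b}$ and using commutation gives $\sigma'(x) = \T_{-S_b}(\sigma(x))$, so $p = -S_b$ works. For ($\Rightarrow$), suppose $\sigma(y) = \sigma'(x)$ with $y = \T_p(x)$. Then $x$ lies in $\sigma(y)$, so $x = \T_{S_b}(y)$ for some $b$. Computing the gap sequence from $x$ in two ways, via $\sigma'$ and via the rotation lemma, gives $\sigma' = \gamma(\sigma'(x), x) = \gamma(\sigma(y), \T_{S_b}y) = \mathrm{R}_b(\sigma)$.
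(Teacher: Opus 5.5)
Your proof is correct and follows essentially the paper's route. The paper's ($\Leftarrow$) step is your identity $\mathrm{R}_b(\sigma)(x) = \T_{-S_b}(\sigma(x))$, and its converse likewise notes that only the $k$ translations relating $x$ to an element $\T_{S_b}(x)$ of $\sigma(x)$ can occur, each matched to a rotation. Your gap sequence $\gamma$ makes explicit the injectivity of $\sigma \mapsto \sigma(x)$ that the paper uses tacitly. Your convention that $\mathrm{R}_b$ begins at $i_{b+1}$ is the one under which the formula $\T_{-S_b}$ is literally right; with the paper's stated convention, where $\mathrm{R}_i$ begins at $\sigma_i$, the shift should be $\T_{-S_{i-1}}$.
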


\begin{proof}
For the forward direction, let $\mathrm{R}_i(\sigma)$ denote the $i$th rotation of $\sigma$:
\[
\mathrm{R}_i(\sigma) = (\sigma_i, \sigma_{i+1}, \ldots, \sigma_k, \sigma_1, \ldots, \sigma_{i-1}).
\]
Let $S_i = \sum_{j=1}^i \sigma_j$ denote the $i$th partial sum. Then
\[
\mathrm{R}_i(\sigma)(x) = \T_{-S_i}(\sigma(x)),
\]
since shifting the starting point of the interval cycle by $-S_i$ yields the rotated realization.

Conversely, for a fixed $\sigma(x)$, there are exactly $k$ transpositions that map $x$ to one of its $k$ elements. Each such transposition corresponds uniquely to a rotation of $\sigma$, since $S_i \neq S_j$ for $i \neq j$. Thus no other transpositions yield a realization not accounted for by some $\mathrm{R}_i(\sigma)$. 
\end{proof}

It is a standard combinatorial fact (e.g., \cite[p.~44]{flajolet2009analytic}) that
\[
|\Sigma(n,k)| = \binom{n-1}{k-1},
\]
i.e., there are $\binom{n-1}{k-1}$ compositions of $n$ into $k$ positive parts. By Proposition~\ref{prop:unique-interval-sequence}, each orbit cover corresponds uniquely to a rotation class
\[
[\sigma] = \{ \mathrm{R}_i(\sigma) \mid 0 \le i < k \} \subset \Sigma(n,k),
\]
and thus the number of distinct orbit covers of type $(n,k)$ is the number of such classes.

Each equivalence class $[\sigma]$ corresponds to a unique orbit of $k$-ads with respect to the action of $\T X$, and together these classes exhaust all possible $k$-adic orbit covers of $X$. Given such a class representative $\sigma$, we denote the corresponding cover of $X$ by $X^{(\sigma)}$.

\begin{theorem}[Triadic Interval Classes for Heptatonic Scales]
\label{thm:triadic-classes-seven}
Under the action of the rotation group $\mathbf{R}\mathbb{Z}^3$ on $\Sigma(7,3)$, there are exactly five rotation classes:
\[
\begin{aligned}
[(1,1,5)] &= \{(1,1,5), (1,5,1), (5,1,1)\}, \\
[(1,2,4)] &= \{(1,2,4), (2,4,1), (4,1,2)\}, \\
[(1,3,3)] &= \{(1,3,3), (3,1,3), (3,3,1)\}, \\
[(1,4,2)] &= \{(1,4,2), (2,1,4), (4,2,1)\}, \\
[(2,2,3)] &= \{(2,2,3), (2,3,2), (3,2,2)\}.
\end{aligned}
\]
\end{theorem}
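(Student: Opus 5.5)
The count follows from the orbit-counting lemma (Burnside), and the explicit classes come from listing $\Sigma(7,3)$ and grouping by rotation.

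First, the total count. By the standard fact quoted above, $|\Sigma(7,3)| = \binom{6}{2} = 15$. The rotation group $\mathbf{R}\mathbb{Z}^3 \cong \mathbb{Z}_3$ acts on $\Sigma(7,3)$ by cyclically permuting coordinates. A nontrivial rotation fixes $(i_1,i_2,i_3)$ only if $i_1=i_2=i_3$. That would force $3i_1 = 7$, which has no integer solution. So the action is free, every orbit has size exactly $3$, and there are $15/3 = 5$ rotation classes. Burnside's lemma gives the same figure, $(15+0+0)/3 = 5$.

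Second, the explicit list. Each class contains a representative whose first entry is a minimal entry. Up to rotation, it therefore suffices to list triples with a minimal first entry.

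If the minimum is $1$, we need $i_2+i_3 = 6$ with $i_2,i_3 \geq 1$. This gives the five triples $(1,1,5),(1,2,4),(1,3,3),(1,4,2),(1,5,1)$. Here $(1,5,1)$ is a rotation of $(1,1,5)$. The triples $(1,2,4)$ and $(1,4,2)$ lie in distinct classes, because they are mirror images rather than rotations of one another. The rotation of $(1,3,3)$ beginning at its $1$ is unique.

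If the minimum is $2$, all entries are at least $2$ and sum to $7$. The only possibility is a permutation of $(2,2,3)$, and all three such permutations are rotations of one another.

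Writing out the three rotations of each representative reproduces exactly the five displayed sets. Their sizes total $5\cdot 3 = 15$, so they partition $\Sigma(7,3)$, which confirms the count.

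The only delicate point is that $[(1,2,4)] \neq [(1,4,2)]$. This is settled directly: the three rotations of $(1,2,4)$ are $(1,2,4),(2,4,1),(4,1,2)$, and none of them equals $(1,4,2)$. By Proposition~\ref{prop:unique-interval-sequence}, these five rotation classes correspond bijectively to the distinct triadic orbit covers of a heptatonic scale.
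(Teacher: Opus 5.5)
Your proof is correct and takes essentially the paper's route: the paper simply enumerates $\Sigma(7,3)$ and identifies cyclic rotations. Your free-action observation adds something useful: no constant triple sums to $7$, so every class has exactly three elements and there are $15/3=5$ classes. That certifies both the count and the completeness of your list without checking all fifteen compositions individually.
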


\begin{proof}
This follows from exhaustive enumeration of $\Sigma(7,3)$ and identification of cyclic rotations. 
\end{proof}

\begin{corollary}
For any seven-note scale $X$, there exist exactly five distinct triadic orbit covers up to scalar translation, corresponding to the five rotation classes in Theorem~\ref{thm:triadic-classes-seven}.
\end{corollary}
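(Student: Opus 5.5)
The corollary should follow by combining Theorem~\ref{thm:triadic-classes-seven} with Proposition~\ref{prop:unique-interval-sequence}, so the plan is to set up a bijection between triadic orbit covers of $X$ modulo scalar translation and rotation classes in $\Sigma(7,3)$, and then count the classes.

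\textbf{Every triad comes from a composition.} Fix a seven-note scale $X$, so $X$ is a $\Z_7$-torsor, and fix a basepoint $x \in X$. Let $A \subset X$ be any $3$-element subset. List its elements by increasing scale-degree distance from one of them, say $a$. The successive gaps, including the wrap-around gap back to $a$, are positive integers summing to $7$. They therefore form some $\sigma \in \Sigma(7,3)$ with $A = \sigma(a)$. By transitivity of $\T X$ we can write $a = \T_q(x)$, which gives $A = \T_q(\sigma(x))$. Hence the orbit cover $X^{(A)}$ equals $X^{(\sigma)}$.

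\textbf{The correspondence is well defined and injective.} Two orbit covers $X^{(\sigma)}$ and $X^{(\sigma')}$ coincide exactly when $\sigma'(x) = \T_p(\sigma(x))$ for some $p$. The two directions of this are as follows.
\begin{itemize}
\item If the covers coincide, then $\sigma'(x)$ is one of the translates of $\sigma(x)$.
\item If $\sigma'(x) = \T_p(\sigma(x))$, then $\sigma'(x)$ lies in the $\T X$-orbit of $\sigma(x)$, so the two orbits, and hence the two covers, are equal.
\end{itemize}
Proposition~\ref{prop:unique-interval-sequence} then says this happens iff $\sigma' = \mathrm{R}_i(\sigma)$ for some $i$. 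So $[\sigma] \mapsto X^{(\sigma)}$ is a well-defined injection from rotation classes to triadic orbit covers, and the previous step shows it is surjective.

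\textbf{Counting.} By Theorem~\ref{thm:triadic-classes-seven} there are exactly five classes, so there are five distinct triadic orbit covers. As a check, the five listed classes each contain three distinct compositions, giving $15 = \binom{6}{2} = |\Sigma(7,3)|$. That the count is exhaustive is also consistent with $\gcd(7,3)=1$: no composition is fixed by a nontrivial rotation, so every class has size $3$ and the number of classes is $15/3 = 5$. The count depends only on $n = 7$ and not on the embedding $X \subseteq \Z_N$, since only the torsor structure is used.

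\textbf{Expected obstacle.} The main subtle point is the reliance on Proposition~\ref{prop:unique-interval-sequence}. Its converse direction needs $S_i \neq S_j$ for distinct partial sums, and it implicitly relies on $\sigma(x)$ having exactly $k$ elements. Both hold here because all parts are positive and sum to $n$, so I would state this explicitly. A second point is to make precise that ``distinct up to scalar translation'' means equality of the families $X^{(A)}$ as sets of subsets; with that reading the step above is immediate.
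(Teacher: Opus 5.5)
Your proposal is correct and follows the paper's route. The paper obtains the corollary by combining Proposition~\ref{prop:unique-interval-sequence} (orbit covers correspond bijectively to rotation classes in $\Sigma(7,3)$) with the count of five classes in Theorem~\ref{thm:triadic-classes-seven}. Your explicit surjectivity step makes precise what the paper only asserts (``together these classes exhaust all possible $k$-adic orbit covers''), and your free-action count $15/3 = 5$ confirms what the paper checks by exhaustive enumeration.
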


\section{Topology of Scale Covers}
\label{sec:topology-of-scale-covers}

Given a covering of a set, we can construct its \emph{nerve}, which is a topological object that encodes the intersection structure of the covering. These nerves often reveal important structural properties. For example, Mazzola has shown \cite[Chapter 13]{mazzola2002topos} that the nerve of the triadic cover of a diatonic scale has the topology of a Möbius strip. One of the most musically salient features of a nerve is its number of $p$-dimensional holes: for instance, the Möbius strip has a single 1-dimensional hole, whereas the nerve of the cover generated by the interval composition $(1,4,2)$ over a seven-note scale has eight 1-dimensional holes.

As we will see in Theorem \ref{thm:heptatonic-triadic-isoclasses}, $k$-adic orbit covers of $n$-note scales can be classified up to isomorphism by the simplicial isomorphism classes of their nerves. This classification is not merely abstract: it captures the underlying intersection patterns of the $k$-ads in the cover. For example, we will see that the tertian orbit cover $X^{((2,2,3))}$ (triads built from thirds) and the quartal orbit cover $X^{((3,3,1))}$ (triads built from fourths) over the diatonic scale have isomorphic nerves. This reflects the fact that their chordal structures share the same intersection properties. In the descending fifths sequence of diatonic triads—CEG, FAC, BDF, EGB, ACE, DFA, GBD, CEG—each adjacent pair shares exactly one common tone, and no three consecutive triads have a tone in common. The same is true of the ascending seconds sequence in quartal harmony—CFB, DGC, EAD, FBE, GCF, ADG, BEA, CFB. Orbit covers with isomorphic nerves therefore afford equivalent common-tone structures and harmonic progressions, even when built from different interval compositions.

\subsection{Nerve Complexes and Orbit Covers}

\begin{definition}[Nerve]
Given a cover $\mathcal{C}_X = \{ C_i \}_{i \in I}$ of a set $X$, its \emph{nerve} $N(\mathcal{C}_X)$ is the simplicial complex whose vertices are indexed by $I$, and whose simplices correspond to nonempty intersections:
\[
N(\mathcal{C}_X) \coloneqq \left\{\, J \subseteq I \mid \bigcap_{j \in J} C_j \neq \varnothing\, \right\}.
\]
The \emph{dimension} of a simplex $J$ is $|J| - 1$, and the dimension of the nerve is the maximum dimension among its simplices. We denote the set of $k$-dimensional simplices by $N(\mathcal{C}_X)_k$.
\end{definition}

If $f : X \to Y$ is a bijection, define the induced cover $\mathcal{C}_{f(X)} \coloneqq \{ f(C_i) \}_{i \in I}$. This yields an isomorphism of nerves
\[
\begin{matrix}
f_\mathcal{C} : & N(\mathcal{C}_X) & \xrightarrow{\;\cong\;} & N(\mathcal{C}_{f(X)}) \\
\\
& J & \longmapsto & J,
\end{matrix}
\]
where the indexing sets are preserved and the intersections are mapped under $f$. More explicitly, for each $j \in J$,
\[
\bigcap_{j \in J} f(C_j) = f\left( \bigcap_{j \in J} C_j \right).
\]

Now let $X^{(A)}$ be a primitive orbit cover with $|A| = k$. If we associate to each set $C = \T_i(A) \in X^{(A)}$ a unique point $c \in C$, then each $x \in X$ lies in exactly $k$ such sets, namely the $k$ translates $\T_j(A)$ containing $x$. These $k$ sets form a $(k{-}1)$-simplex $\Delta_x$ in the nerve $N(X^{(A)})$, uniquely determined by $x$. We call $\Delta_x$ the \emph{harmonic region} associated to $x$. Thus, the harmonic regions correspond bijectively to elements of the scale $X$.

We summarize the key properties of the nerve of a primitive orbit cover:

\begin{proposition}
\label{prop:nerve-properties}
Let $X^{(A)}$ be a primitive orbit cover with $|X| = n$ and $|A| = k$. Then:
\begin{itemize}
    \item $N(X^{(A)})$ is $(k{-}1)$-dimensional;
    \item It contains exactly $n$ distinct $(k{-}1)$-simplices $\Delta_x$, namely the \emph{harmonic regions};
    \item Every $j$-simplex with $j < k-1$ is a face of some $\Delta_x$.
\end{itemize}
\end{proposition}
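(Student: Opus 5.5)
The plan is to identify the nerve explicitly in coordinates. Fix a basepoint and use $\mu$ to identify $X$ with $\Z_n$, so that $\T_i$ becomes $y \mapsto y+i$ and $\T_i(A) = A + i$. Index the vertices of $N(X^{(A)})$ by $i \in \Z_n$, the vertex $i$ standing for the chord $A+i$. The basic observation is that
\[
x \in A + i \iff x - i \in A \iff i \in x - A,
\]
so the set of vertices whose chords contain $x$ is $\Delta_x = x - A \subseteq \Z_n$. This set has exactly $|A| = k$ elements.

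\textbf{Step 1: freeness and the vertex set.} I would first prove that the stabilizer of $A$ under translation is trivial, using the primitivity hypothesis $\gcd(n,k)=1$. The stabilizer is a subgroup $H \le \Z_n$ with $A + H = A$, so $A$ is a union of $H$-cosets. Hence $|H|$ divides $k$, and it also divides $n$, so $|H| = 1$. Consequently the $n$ chords $A+i$ are pairwise distinct, and the nerve has exactly $n$ vertices. In particular each $\Delta_x$ consists of $k$ distinct vertices, so it is a genuine $(k{-}1)$-simplex: all $k$ chords indexed by $\Delta_x$ contain $x$, so their intersection is nonempty.

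\textbf{Step 2: every simplex lies in some $\Delta_x$.} Let $J$ be a simplex, i.e.\ $\bigcap_{j\in J}(A+j) \neq \varnothing$, and choose $x$ in that intersection. By the observation above, every $j \in J$ lies in $x - A = \Delta_x$, so $J \subseteq \Delta_x$. This gives the third bullet at once. It also gives the first bullet: no simplex has more than $k$ vertices, and the $\Delta_x$ have exactly $k$, so $\dim N(X^{(A)}) = k-1$. It further shows that every $(k{-}1)$-simplex equals some $\Delta_x$, since a $k$-element subset of a $k$-element set is the whole set.

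\textbf{Step 3: the $\Delta_x$ are pairwise distinct (the main obstacle).} It remains to show that $x \mapsto \Delta_x$ is injective, which yields exactly $n$ top simplices. Suppose $\Delta_x = \Delta_y$, i.e.\ $x - A = y - A$. Then $A = A + (x-y)$, so $x - y$ lies in the stabilizer of $A$. By Step 1 this stabilizer is trivial, so $x = y$. I expect this to be the only point needing care: the $\gcd$ condition is used again here, and without it the map $x \mapsto \Delta_x$ can fail to be injective (for example, $A$ a union of cosets of a nontrivial subgroup). Reusing the coset argument from Step 1 handles it cleanly.
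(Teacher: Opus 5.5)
Your proof is correct and follows the same idea as the paper. The paper gives no separate proof; the paragraph before the proposition argues that $\Delta_x$ is the set of $k$ translates containing $x$ and that these regions are in bijection with $X$. Your description $\Delta_x = x - A$, together with the stabilizer argument from $\gcd(n,k)=1$, supplies the steps the paper only asserts: freeness of the action, injectivity of $x \mapsto \Delta_x$, and the fact that every simplex lies in some $\Delta_x$, which gives both the dimension bound and the third bullet.
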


To canonically enumerate these simplices, we define a \emph{rooted} primitive orbit cover. Let $X$ be a mode with group structure $(X, \oplus)$ and tonic $t \in X$. Fix $\sigma \in \Sigma(n,k)$ and set $A = \sigma(t)$. Then $X^{(A)}$ is said to be \emph{rooted at $t$}. Each translated $k$-ad $\sigma(t \oplus i)$ corresponds to a unique harmonic region $\Delta_{t \oplus i}$. This yields a bijection:
\begin{equation}
\begin{matrix}
\Delta : & X^{(A)} & \xlongrightarrow{\cong} & N(X^{(A)})_{k-1} \\
\\
 & \sigma(x) & \longmapsto & \Delta_x
\end{matrix}
\end{equation}

\subsection{Affine Classification of Nerves}

We now classify orbit covers up to isomorphism of their nerve complexes. While Theorem~\ref{thm:triadic-classes-seven} identifies five distinct triadic orbit covers of a seven-note scale, some are topologically equivalent—i.e., their nerves are isomorphic.

We first note the following fact:
\begin{proposition}
For any affine automorphism $f : \Z_n \to \Z_n$ of the form $f(x) = ux + v$ with $u \in \Z_n^\times$, we have
\[
f \circ \T_i = \T_{ui} \circ f.
\]
\end{proposition}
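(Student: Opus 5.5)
This is a direct computation on an arbitrary element of $\Z_n$. I identify the scalar translation $\T_i$ with the map $x \mapsto x + i$ on $\Z_n$. This identification is legitimate because, after choosing any mode coordinate $\mu_j$, the torsor action~\eqref{eqn:torsor} transports to addition in the degree group. The claimed identity is then an equality of functions $\Z_n \to \Z_n$, and I would verify it pointwise.

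For $x \in \Z_n$ I compute both sides. On the left,
\[
(f \circ \T_i)(x) = f(x + i) = u(x+i) + v = ux + ui + v .
\]
On the right,
\[
(\T_{ui} \circ f)(x) = (ux + v) + ui = ux + ui + v .
\]
These agree because $\Z_n$ is a commutative ring, so the two expressions are equal for every $x$. Hence $f \circ \T_i = \T_{ui} \circ f$.

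Two small points need noting. First, $ui$ is computed in the ring $\Z_n$, so $\T_{ui}$ is a well-defined element of $\T X$. Second, the hypothesis $u \in \Z_n^\times$ is not needed for the identity itself. It only ensures that $f$ is an automorphism, with inverse $x \mapsto u^{-1}(x - v)$. As a consequence, $i \mapsto ui$ is then an automorphism of $\T X \cong \Z_n$, so conjugation by $f$ permutes the scalar translations. This is presumably how the fact will be used later, to carry orbit covers to orbit covers.

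I do not expect a genuine obstacle. The only care required is in the transfer from the torsor $X$ to the coordinates $\Z_n$. I would remark that any affine automorphism of the torsor, read in a chosen mode coordinate, has the form $f(x) = ux + v$. I would also remark that changing the mode coordinate alters $v$ but not $u$, so the statement does not depend on which mode is chosen.
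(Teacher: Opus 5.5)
Your proof is correct and follows essentially the same route as the paper: both check the identity pointwise via $f(x+i) = ux + ui + v = \T_{ui}(ux+v) = (\T_{ui}\circ f)(x)$. Your extra remarks are accurate but do not appear in the paper: the identity does not need $u \in \Z_n^\times$, and a change of mode coordinate alters $v$ but not $u$.
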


\begin{proof}
For any $x \in \Z_n$,
\begin{align*}
(f \circ \T_i)(x) & = f(x+i) \\
& = u(x+i) + v \\
& = ux + ui + v \\
& = \T_{ui}(ux + v) \\
& = (\T_{ui} \circ f)(x). \\
\end{align*}
\end{proof}

For $\Z_n^{(\alpha)}$ a primitive orbit cover generated by $\alpha \in \Sigma(n,k)$, we show that $f$ sends $\Z_n^{(\alpha)}$ to another primitive orbit cover $\Z_n^{(\beta)}$ for a unique $\beta \in \Sigma(n,k)$ determined by $\alpha$ and $u$.

\begin{definition}[$u$-transform]
Let $\sigma = (a_1, \ldots, a_k) \in \Sigma(n,k)$ and $u \in \Z_n^\times$. Define partial sums $S_j = \sum_{i=1}^j a_i$ (with $S_0 = 0$), and let 
\[
U_j = \begin{cases}
	u S_j \pmod n & \text{if } S_j < n \\ 
	n  & \text{if } S_j = n
	\end{cases}
\]
After sorting the values so that $0 = U_{j_0} < U_{j_1} < \cdots < U_{j_k} = n$, define
\[
u \cdot \sigma \coloneqq (U_{j_1} - U_{j_0}, \ldots, U_{j_k} - U_{j_{k-1}}) \in \Sigma(n,k)
\]
as the \emph{$u$-transform} of $\sigma$. 
\end{definition}

\begin{proposition}
\label{prop:affine-transport}
Let $f(x) = ux + v$ be an affine automorphism on $\Z_n$, and let $\alpha \in \Sigma(n,k)$ define a primitive orbit cover. Then:
\[
f(\Z_n^{(\alpha)}) = \Z_n^{(u \cdot \alpha)},
\]
and $f$ induces an isomorphism of the associated nerve complexes.
\end{proposition}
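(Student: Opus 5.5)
We first identify what each chord of the cover is and where $f$ sends it. Working in $\Z_n$ with its standard torsor structure, the realization of $\alpha=(a_1,\ldots,a_k)$ at $x$ is
\[
\alpha(x)=\{\,x+S_b \mid 0\le b<k\,\}.
\]
The orbit cover is then $\Z_n^{(\alpha)}=\{\T_i(\alpha(0))\mid i\in\Z_n\}$. The plan has three steps.

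\emph{Step 1: image of the generating chord.} Since $f$ is affine,
\[
f(\alpha(0))=\{\,uS_b+v \mid 0\le b<k\,\}=\T_v\bigl(\{uS_b \bmod n \mid 0\le b<k\}\bigr).
\]
The set $\{uS_b \bmod n\}$ consists of the values $U_{j_0}=0<U_{j_1}<\cdots<U_{j_{k-1}}$. These values are distinct because $u$ is a unit and the $S_b$ with $0\le b<k$ are distinct residues mod $n$. By construction of the $u$-transform, the consecutive gaps of these sorted values, with the last gap closing up to $n$, are exactly the entries of $u\cdot\alpha$. The partial sums of $u\cdot\alpha$ are therefore precisely these sorted values, so $\{uS_b\}=(u\cdot\alpha)(0)$. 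This also shows that $u\cdot\alpha$ has positive parts summing to $n$, so $u\cdot\alpha\in\Sigma(n,k)$. Hence
\[
f(\alpha(0))=\T_v\bigl((u\cdot\alpha)(0)\bigr).
\]

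\emph{Step 2: transport of the whole cover.} Apply the preceding proposition, $f\circ\T_i=\T_{ui}\circ f$. This gives
\[
f(\T_i(\alpha(0)))=\T_{ui}\T_v\bigl((u\cdot\alpha)(0)\bigr)=\T_{ui+v}\bigl((u\cdot\alpha)(0)\bigr).
\]
As $i$ ranges over $\Z_n$, so does $ui+v$, because $u\in\Z_n^\times$. So $f$ maps $\Z_n^{(\alpha)}$ onto $\Z_n^{(u\cdot\alpha)}$, via the index bijection $i\mapsto ui+v$. Primitivity is preserved since $|u\cdot\alpha|=k$. By Proposition~\ref{prop:unique-interval-sequence}, $\beta=u\cdot\alpha$ is determined up to rotation, and the sorting convention in the definition fixes a specific representative.

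\emph{Step 3: isomorphism of nerves.} Here we use the earlier observation that a bijection $f$ induces an isomorphism $N(\mathcal C)\cong N(\mathcal C_{f})$, because $\bigcap f(C_j)=f(\bigcap C_j)$. Composing this isomorphism with the relabeling of vertices $i\mapsto ui+v$ from Step 2 yields a simplicial isomorphism
\[
N(\Z_n^{(\alpha)})\cong N(\Z_n^{(u\cdot\alpha)}).
\]

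The main obstacle is the bookkeeping in Step 1: checking that the sorted multiples $uS_b$ really coincide with the partial sums of $u\cdot\alpha$. This includes handling the convention $U=n$ for $S_k=n$, which plays the role of $0$ mod $n$ and closes the cycle. To settle it, I would verify that the multiset of gaps of a cyclically sorted subset of $\Z_n$ containing $0$ determines the subset starting from $0$.
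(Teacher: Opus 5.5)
Your proof is correct and takes the paper's route. The paper's sketch is your Step~1 (multiply the partial sums by $u$, sort, take differences), and your Steps~2 and~3 spell out what the paper leaves implicit: transporting the whole cover via $f\circ\T_i=\T_{ui}\circ f$ and invoking the bijection-induced nerve isomorphism.

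One correction to your closing remark. It is the \emph{ordered} gap sequence read from $0$ that determines the subset, not the multiset of gaps; for example, $(1,2,4)$ and $(1,4,2)$ give $\{0,1,3\}\neq\{0,1,5\}$. Your telescoping argument in Step~1 already settles this point, so no further verification is needed.
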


\begin{proof}[Sketch]
The image of $\alpha(x)$ under $f$ is a $k$-ad with interval structure given by multiplying the partial sums of $\alpha$ by $u \in \Z_n^\times$, modulo $n$. Sorting and taking differences yields $u \cdot \alpha$.
\end{proof}

\begin{corollary}
Let $\alpha, \beta \in \Sigma(n,k)$ define primitive orbit covers. Then $N(\Z_n^{(\alpha)}) \cong N(\Z_n^{(\beta)})$ if and only if $\beta = u \cdot \alpha$ for some $u \in \Z_n^\times$.
\end{corollary}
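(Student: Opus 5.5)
The plan is to prove the two directions separately. The ``if'' direction is essentially Proposition~\ref{prop:affine-transport}. The ``only if'' direction needs a rigidity argument: any nerve isomorphism can be replaced by an affine one. Throughout, I read ``$\beta = u\cdot\alpha$'' up to rotation, i.e.\ as $[\beta]=[u\cdot\alpha]$. By Proposition~\ref{prop:unique-interval-sequence} the cover $\Z_n^{(\beta)}$ depends only on the rotation class of $\beta$, so this is the only meaningful reading.

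\emph{If.} Suppose $[\beta] = [u\cdot\alpha]$ and put $f(x)=ux$. By Proposition~\ref{prop:affine-transport}, $f(\Z_n^{(\alpha)}) = \Z_n^{(u\cdot\alpha)} = \Z_n^{(\beta)}$. Because $f$ is a bijection, the induced map $f_{\mathcal C}$ of the previous subsection is a simplicial isomorphism. Concretely, the commutation rule $f\circ \T_i = \T_{ui}\circ f$ shows that it sends the vertex $\T_i(A)$ to the vertex $\T_{ui}(f(A))$, so it is a relabelling of the index set.

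\emph{Only if, setup.} First I would describe the nerve combinatorially. Index the chords of $\Z_n^{(\alpha)}$ by $i\in\Z_n$, with chord $i$ equal to $\T_i(A)$ and $A=\alpha(0)$. Then $x\in \T_i(A)$ holds iff $i\in x-A$. Hence the harmonic region $\Delta_x$ is the vertex set $x-A$. By Proposition~\ref{prop:nerve-properties}, the maximal simplices of $N(\Z_n^{(\alpha)})$ are exactly the $n$ translates of $-A$, and $N$ is the downward closure of this family. Every translation $\T_j$ is therefore an automorphism of $N$, so $R=\{\T_j\}\cong\Z_n$ is a regular subgroup of $G=\mathrm{Aut}(N)\le \mathrm{Sym}(\Z_n)$. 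Now let $\psi: N(\Z_n^{(\alpha)})\to N(\Z_n^{(\beta)})$ be an isomorphism, with $B=\beta(0)$. Then $\psi$ is a permutation of $\Z_n$ carrying the family $\{x-A\}$ onto the family $\{y-B\}$. Consequently $\psi^{-1}R\psi$ is a second regular cyclic subgroup of $G$.

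\emph{Only if, main step.} The key claim is that $\psi^{-1}R\psi$ and $R$ are conjugate inside $G$, say $\psi^{-1}R\psi=\gamma^{-1}R\gamma$ with $\gamma\in G$. Given this, $\psi'=\psi\gamma^{-1}$ is still a nerve isomorphism and normalizes the translation group. A permutation of $\Z_n$ normalizing $R$ is affine, $x\mapsto ux+v$ with $u\in\Z_n^\times$. So $\psi'$ maps $-A$ onto a translate of $-B$, which means $uA$ is a translate of $B$. Translating back through the $u$-transform gives $[\beta]=[u\cdot\alpha]$.

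I expect this conjugacy to be the main obstacle: it is exactly a Cayley-isomorphism (Ádám/Babai-type) property of $\Z_n$ for these complexes. For $n=p$ prime, which covers the heptatonic case of interest, it is immediate. Here $|G|$ divides $p!$, so $p\,\|\,|G|$ and any two subgroups of order $p$ are Sylow $p$-subgroups, hence conjugate by Sylow's theorem. For composite $n$ I would first try Babai's criterion: $\Z_n$ has the CI-property for this family iff all regular cyclic subgroups of $G$ are $G$-conjugate. I would verify that criterion using known CI results for cyclic groups. Where those results are unavailable (e.g.\ $8\mid n$ or $9\mid n$), I would expect to restrict the corollary or check the relevant $(n,k)$ by computer. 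This is consistent with Theorem~\ref{thm:heptatonic-triadic-isoclasses}, which only needs $n=7$.
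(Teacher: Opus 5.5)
Your argument is correct where it is complete, and it already goes further than the paper. The paper proves only the ``if'' half, via Proposition~\ref{prop:affine-transport}, and gives no argument for ``only if''. Your rotation-class reading is forced, not optional. For $n=7$ the six values $u\cdot(2,2,3)$ are $(2,2,3),(1,3,3),(5,1,1),(1,1,5),(3,3,1),(3,2,2)$. So the rotation $(2,3,2)$, which gives the identical cover, is not of the form $u\cdot(2,2,3)$. Your Sylow argument then proves ``only if'' for prime $n$, which is all Theorem~\ref{thm:heptatonic-triadic-isoclasses} needs. The gap is the composite case, and it is not a missing citation. For some composite $n$ the conjugacy claim in your main step fails, and so does the corollary.

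\textbf{Counterexample.} Take $n=25$, $k=7$, $A=\{0,1,5,6,11,16,21\}$ and $B=\{0,1,6,10,11,16,21\}$, i.e.\ $\alpha=(1,4,1,5,5,5,4)$ and $\beta=(1,5,4,1,5,5,4)$. Each is the full residue class of $1$ mod $5$ plus two points of the class of $0$. Define $\pi(a+5y)=a+10y$ for $0\le a\le 4$, $y\in\Z_5$. This is a bijection of $\Z_{25}$ preserving every residue class mod $5$.

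\emph{The nerves are isomorphic.} For $t=a+5b$ one checks directly that $\pi(\T_t(A))=\T_{\pi(t)}(B)$. Since $\pi$ is a bijection carrying intersections to intersections, the map $\T_t(A)\mapsto\T_{\pi(t)}(B)$ is a nerve isomorphism.

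\emph{No affine map relates them.} Write $\bar{x}$ for residues mod $5$. For a unit $u$, the set $uA+v$ is the full class $\bar u+\bar v$ together with the pair $\{v,v+5u\}$ in class $\bar v$. Matching $B$ forces $\bar v=0$ and $\bar u=1$, hence $5u=5$ in $\Z_{25}$. The pair then has difference $\pm5$, whereas $B$'s pair has difference $\pm10$. Hence $[\beta]\neq[u\cdot\alpha]$ for every $u\in\Z_{25}^\times$.

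The mechanism is that a multiplier acts on $\Z_{25}/5\Z_{25}$ and on $5\Z_{25}$ through the same unit mod $5$, while $\pi$ decouples the two actions.

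\textbf{On the tools for composite $n$.} The CI classifications for cyclic groups you meant to invoke (Muzychuk's) concern circulant graphs and digraphs, i.e.\ binary relations. They do not directly govern the $k$-uniform family of maximal faces. The relevant general result is P\'alfy's theorem: $\Z_n$ is a CI-group for all relational structures iff $\gcd(n,\varphi(n))=1$ or $n=4$. It yields the corollary whenever $\gcd(n,\varphi(n))=1$, with your Sylow argument as the prime case. Here $\gcd(25,20)=5$. The failure is also not confined to the $n$ you flagged, since $8\nmid 25$ and $9\nmid 25$. So the corollary must be restricted (for example to prime $n$, or to $\gcd(n,\varphi(n))=1$), not proved in the stated generality.
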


This leads to the classification result:

\begin{theorem}[Isomorphism Classes of Triadic Orbit Covers for Heptatonic Scales]
\label{thm:heptatonic-triadic-isoclasses}
Let $\Sigma(7,3)/\mathbf{R}\Z^3$ denote the five rotation classes of triadic interval compositions. Under the action of $\Z_7^\times$ (units modulo 7), these fall into two affine orbits:
\[
\begin{aligned}
\mathcal{O}_1 &= \{ [(1,1,5)],\ [(1,3,3)],\ [(2,2,3)] \}, \\
\mathcal{O}_2 &= \{ [(1,2,4)],\ [(1,4,2)] \}.
\end{aligned}
\]
Hence, there are exactly two nerve isomorphism classes of triadic orbit covers of heptatonic scales.
\end{theorem}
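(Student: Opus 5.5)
The plan is to reduce the statement to a finite computation on translation classes of $3$-subsets of $\Z_7$, and then invoke the Corollary following Proposition~\ref{prop:affine-transport}. By Proposition~\ref{prop:unique-interval-sequence}, a rotation class $[\sigma]$ is the same thing as a $\T X$-orbit of $3$-ads. So instead of computing $u$-transforms of compositions directly, I will choose the realization $\sigma(0)\subset\Z_7$ as a representative and act on it by multiplication. Since $f(x)=ux+v$ maps $\sigma(x)$ to a translate of $(u\cdot\sigma)(0)$, Proposition~\ref{prop:affine-transport} shows that $[\sigma]\mapsto[u\cdot\sigma]$ is a well-defined action of $\Z_7^\times$ on the five classes of Theorem~\ref{thm:triadic-classes-seven}. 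It is an action because multiplications compose and commute with translations up to relabeling, as in the preceding proposition $f\circ\T_i=\T_{ui}\circ f$.

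The representatives, read off from partial sums, are:
\begin{itemize}
\item $(1,1,5)\leftrightarrow\{0,1,2\}$;
\item $(1,3,3)\leftrightarrow\{0,1,4\}$;
\item $(2,2,3)\leftrightarrow\{0,2,4\}$;
\item $(1,2,4)\leftrightarrow\{0,1,3\}$;
\item $(1,4,2)\leftrightarrow\{0,1,5\}$.
\end{itemize}
Since $\Z_7^\times$ is cyclic, generated by $3$ and containing $2$, it suffices to compute the images under $u=2$ and $u=3$ and then close up.

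For $\mathcal{O}_1$, the computation runs as follows. Under $u=2$ we get $\{0,1,2\}\mapsto\{0,2,4\}$, which is $[(2,2,3)]$. Under $u=3$ we get $\{0,1,2\}\mapsto\{0,3,6\}$; translating by $1$ gives $\{0,1,4\}$, which is $[(1,3,3)]$. So these three classes lie in one orbit.

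For $\mathcal{O}_2$, the computation runs as follows. Under $u=2$ we get $\{0,1,3\}\mapsto\{0,2,6\}$; translating by $1$ gives $\{0,1,3\}$ again, so $2$ fixes $[(1,2,4)]$. Under $u=3$ we get $\{0,1,3\}\mapsto\{0,2,3\}$, whose gaps are $(2,1,4)\in[(1,4,2)]$. Hence the orbit of $[(1,2,4)]$ is $\{[(1,2,4)],[(1,4,2)]\}$. Its stabilizer is the subgroup of quadratic residues $\{1,2,4\}$, reflecting that $\{0,1,3\}$ is a perfect difference set. By orbit--stabilizer, this orbit has exactly size $2$.

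Because the two orbits together account for all five classes, and an orbit of size $3$ cannot meet one of size $2$, the classes split into exactly $\mathcal{O}_1$ and $\mathcal{O}_2$. Finally, the Corollary to Proposition~\ref{prop:affine-transport} identifies nerve isomorphism with lying in the same $\Z_7^\times$-orbit. Since $\gcd(7,3)=1$, all these covers are primitive and the Corollary applies. This yields exactly two nerve isomorphism classes.

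The only genuinely delicate point is the reliance on the ``only if'' direction of that Corollary, which separates $\mathcal{O}_1$ from $\mathcal{O}_2$. If I wanted to check it independently, I would compare a simple invariant of the two nerves. In $\mathcal{O}_1$, take the tertian cover $\{0,2,4\}+i$: two triads meet iff their roots differ by $\pm2$ or $\pm4$, so the $1$-skeleton has $14$ edges. In $\mathcal{O}_2$, $\{0,1,3\}$ is a difference set, so every pair of translates meets in exactly one point; the $1$-skeleton is $K_7$ with $21$ edges. Distinct edge counts prevent any isomorphism between the two nerves, which confirms the count of two.
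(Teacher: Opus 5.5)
Your proof is correct and takes the same route the paper implicitly takes: you compute the $\Z_7^\times$-action on the five rotation classes, where working with the realizations $\sigma(0)$ and using orbit--stabilizer is just a clean way of computing $u$-transforms, and then you invoke the Corollary to Proposition~\ref{prop:affine-transport}. Your edge-count check ($14$ versus $21$ edges in the $1$-skeleton) should be promoted from a remark to the actual argument separating $\mathcal{O}_1$ from $\mathcal{O}_2$, because Proposition~\ref{prop:affine-transport} only yields the ``if'' direction of that Corollary and the paper never proves the ``only if'' direction.
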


\begin{example}[Isomorphisms of a Diatonic Triadic Cover]
Figure~\ref{fig:triadic-cover-comparison} shows the opening of Bach's chorale BWV 254 in F major, harmonized using the standard triadic orbit cover $F^{((2,2,3))}$ over the diatonic scale
\[
F = \{5, 7, 9, 10, 0, 2, 4\} \subseteq \Z_{12}.
\]

\begin{figure}[ht]
  \centering
  \begin{subfigure}[b]{1\textwidth}
    \centering
    \includegraphics[width=\textwidth]{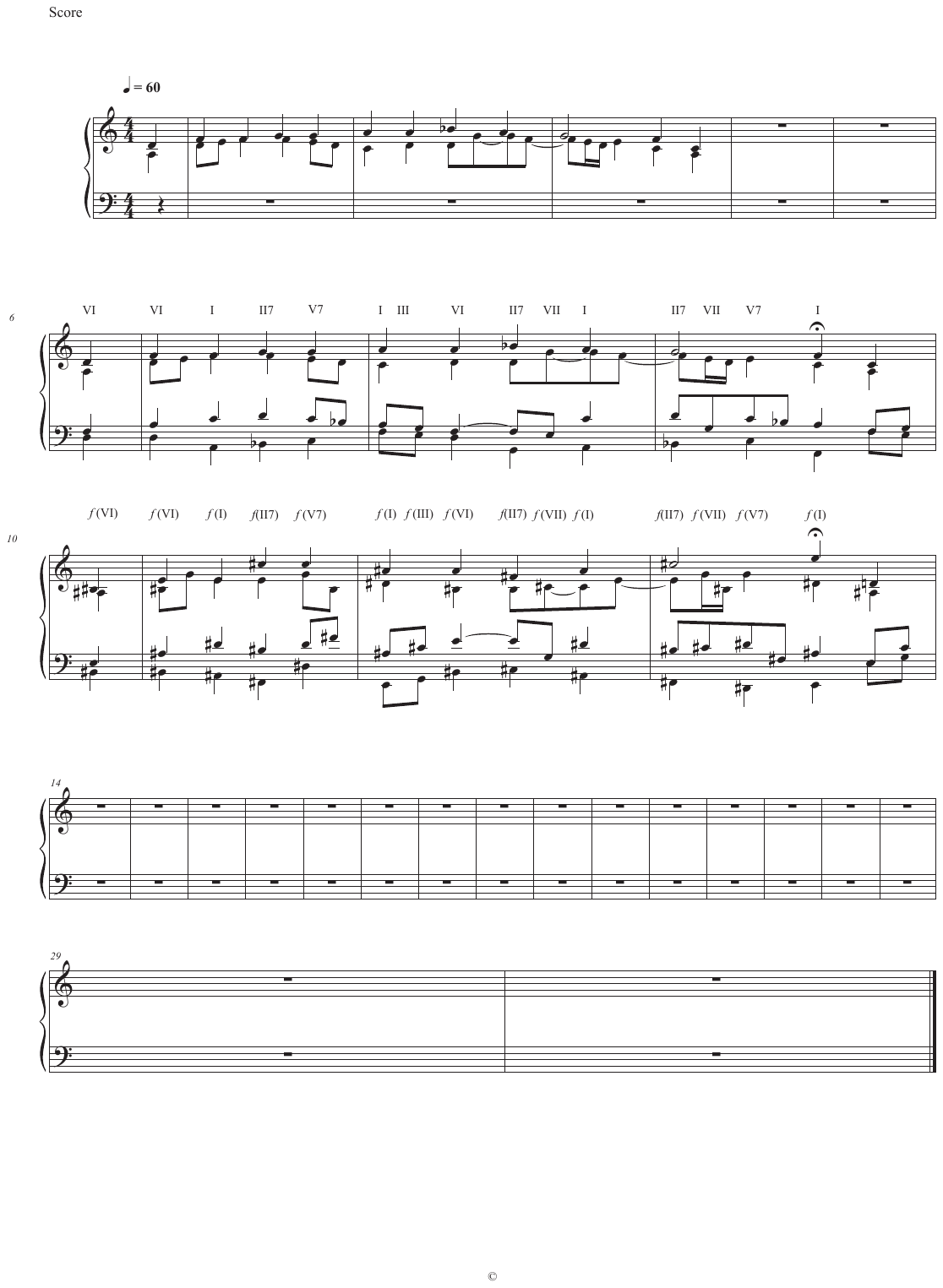}
    \caption{Excerpt harmonized using the diatonic triadic cover \( F^{((2,2,3))} \).}
    \label{fig:diatonic-cover}
  \end{subfigure}
  \vspace{0.75em} 

  \hfill
  \begin{subfigure}[b]{1\textwidth}
    \centering
    \includegraphics[width=\textwidth]{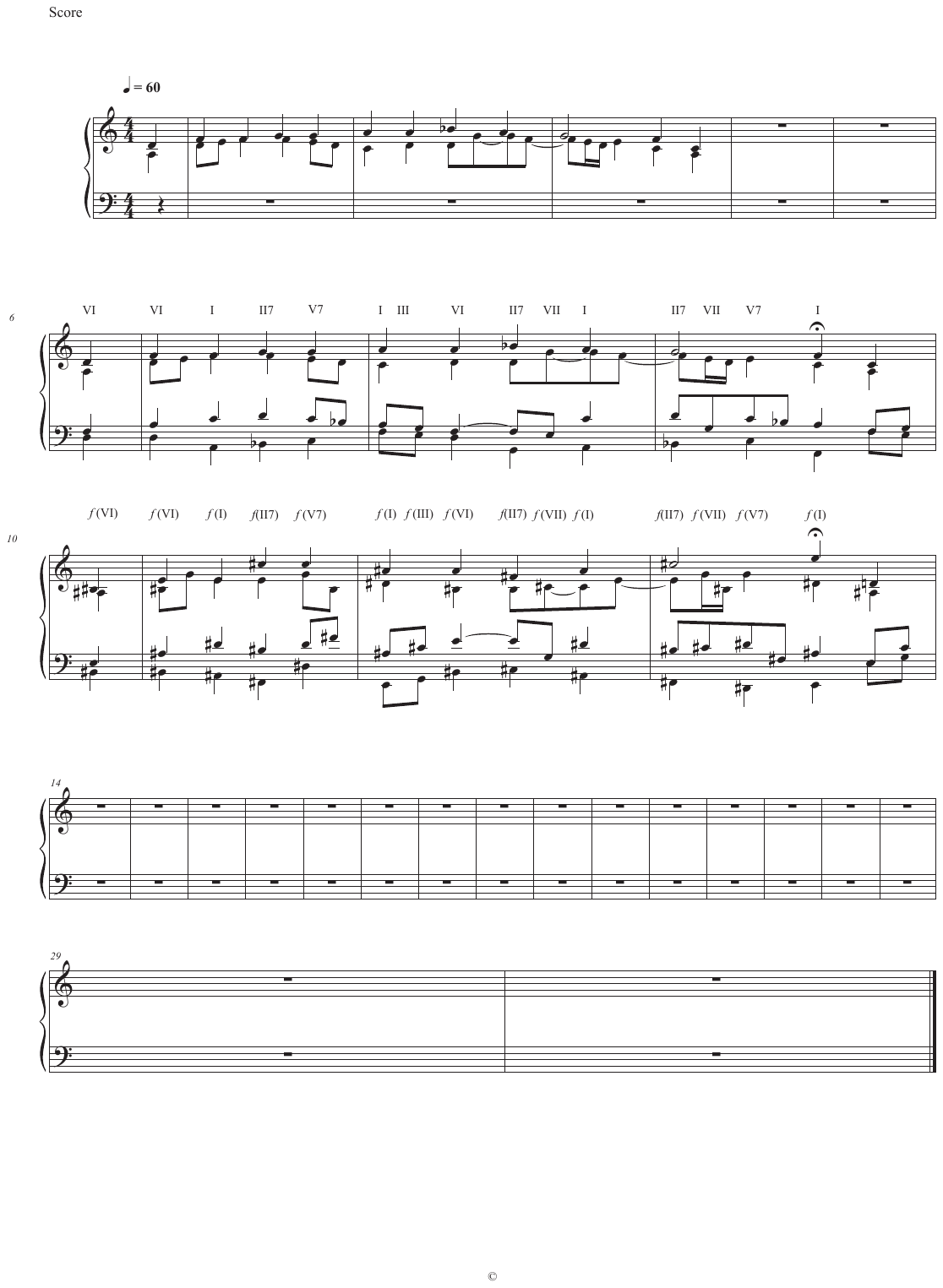}
    \caption{Same excerpt transformed to the exotic cover \( X^{((3,3,1))} \).}
    \label{fig:exotic-cover}
  \end{subfigure}
  \caption{Comparison of harmonic structure in diatonic and exotic triadic orbit covers. The transformation preserves orbit structure and local common-tone relations.}
  \label{fig:triadic-cover-comparison}
\end{figure}

Now let $X = \{4, 6, 7, 10, 0, 1, 3\}$ be a non-diatonic scale. Consider the affine automomorphism $f(x) = 5x \pmod 7$ on $\Z_7$, which maps the interval composition $(2,2,3)$ to $(3,3,1)$, i.e., 
\[
f \cdot (2,2,3) = (3,3,1).
\] 
This induces an isomorphism between the triadic cover $F^{((2,2,3))}$ and the orbit cover
\[
X^{((3,3,1))} = \{\{4, 10, 3\}, \{6, 0, 4\}, \{7, 1, 6\}, \{10, 3, 7\}, \{0, 4, 10\}, \{1, 6, 0\}, \{3, 7, 1\}\}.
\]
One such isomorphism between the two covers is realized by the following pointwise correspondence:
\[
\begin{aligned}
5 &\mapsto 4, \\
7 &\mapsto 1, \\
9 &\mapsto 10, \\
10 &\mapsto 6, \\
0 &\mapsto 3, \\
2 &\mapsto 0, \\
4 &\mapsto 7.
\end{aligned}
\]

Despite the fact that the harmonic content of $X^{((3,3,1))}$ differs sharply from the diatonic triads in $F^{((2,2,3))}$—featuring chords of set-class types such as 016 and 026—the underlying common-tone structure is preserved. For example, the I–VI progression in BWV 254 (F major to D minor) shares two common tones, and its image in $X^{((3,3,1))}$ does as well. This shared nerve structure gives the transformed cover a perceptual coherence akin to functional harmony, even as its harmonic surface sounds more post-tonal and exotic.
\end{example}

\section{Outlook and Future Prospects}

The author is currently composing a series of works titled \emph{Chorales}, which draw directly on the theoretical framework presented in this paper. These compositions make extensive use of both the scale structures and orbit covers introduced here, but they also rely on a broader and more sophisticated framework the author is developing in \emph{Mathematical Principles of a Generalized Tonal Practice}. This framework takes the core constructions of this paper as foundational, and extends them to support a fully generalized theory of tonality.

This developing theory of tonality enables the formulation of functional harmonic syntax in the spirit of Rohrmeier’s generative approach \cite{rohrmeier2011towards}, while expanding it beyond the diatonic domain to encompass a wide variety of orbit covers and scales. In doing so, it supports a generalized tonal practice that can interface with both traditional and post-tonal harmonic idioms.

Future work will present this more comprehensive theory of tonal systems, along with compositional applications. 

\pagebreak

\printbibliography

\end{document}